\title{Blow-up of solutions to a Dirichlet problem for the discrete semi-linear heat equation}
\author{Keisuke Matsuya\\Graduate School of Mathematical Sciences, University of Tokyo,\\ Komaba 3-8-1, Meguro, Tokyo
153-8914, JAPAN}
\newtheorem{theorem}{Theorem}
\newtheorem{proposition}{Proposition}[section]
\newtheorem{lemma}{Lemma}[section]
\newtheorem{definition}{Definition}[section]
\newtheorem{proof}{Proof}
\begin{document}
\maketitle
\section{Introduction}
In this paper, we consider the following partial difference equation with prescribed initial and boundary conditions:
\begin{equation}\label{dheq}
\begin{cases}
\displaystyle{f^{s+1}_{\vec{n}} = \frac{g^s_{\vec{n}}}{\{1-\alpha\delta(g^s_{\vec{n}})^\alpha\}^{1/\alpha}}\ (s\in\mathbb{Z}_{\ge0},\ \vec{n}\in\Omega_{\text{D}}^\circ)},\\
f^0_{\vec{n}}=a_{\vec{n}}\ge0,\not\equiv0\ (\vec{n}\in\Omega_{\text{D}}),\\
f^s_{\vec{n}}=0\ (s\in\mathbb{Z}_{\ge0}\ \vec{n}\in\partial\Omega_{\text{D}}),
\end{cases}
\end{equation}
$\Omega_{\text{D}}$ is a bounded subset of $\mathbb{Z}^d$,\ $\partial \Omega_{\text{D}}$ is the boundary of $\Omega_{\text{D}}$, $\Omega_{\text{D}}^\circ$ is the interior of $\Omega_{\text{D}}$, (namely $\Omega_{\text{D}}^\circ:=\Omega_{\text{D}} \setminus \partial \Omega_{\text{D}}$),\ $f^s_{\vec{n}}:=f(s,\vec{n}),\ s\in\mathbb{Z}_{\ge0},\ \vec{n}\in\Omega_{\text {D}}$.
Moreover, we take\ $\alpha,\delta>0$ and $g^s_{\vec{n}}$ define as:
\begin{equation*}
g^s_{\vec{n}}:=\sum^d_{k=1}\frac{f^s_{\vec{n}+\vec{e}_k}+f^s_{\vec{n}-\vec{e}_k}}{2d},
\end{equation*}
where $\vec{e}_k$ is the unit vector whose $k$-th component is 1 and the others are 0.

The difference equation in\eqref{dheq} is investigated \cite{MT} as a discretization of the following semi-linear heat equation:
\begin{equation}\label{heq}
\displaystyle{\frac{\partial f}{\partial t}=\Delta f + f^{1+\alpha}},
\end{equation}
where $f:=f(t,\vec{x}),\ t\ge0,\ \vec{x}\in\Omega_{\text{C}}\subset\mathbb{R}^d$ and $\Delta$ is a $d$-dimensional Laplacian.

Solutions of \eqref{heq} are not necessarily bounded for all $t\ge0$.
In general, if there exists a finite time $T>0$ for which the solution of \eqref{heq} in $(t,\vec{x})\in[0,T)\times\Omega_{\text{C}}$ satisfies
\begin{equation*}
\limsup\limits_{t\to T-0}{\| f(t,\cdot)\|_{L^{\infty}}}=\infty,
\end{equation*}
where
\begin{equation*}
\| f(t,\cdot)\|_{L^\infty}:=\sup\limits_{\vec{x}\in\Omega_{\text{C}}}{|f(t,\vec{x})|},
\end{equation*}
then we say that the solution of \eqref{heq} blows up at time $T$.

The Cauchy problem for \eqref{heq} has been studied and a critical exponent which characterises the blow-up of the solutions for \eqref{heq} has been discovered and studied by Fujita and et al.\cite{F,H,KST,W}\\
In fact, the difference equation \eqref{dheq} has similar characteristics to the critical exponent known from the continuous case.

Considering \eqref{heq} on $[0,T)\times\Omega_{\text{C}}$ with the following initial and boundary conditions
\begin{equation}\label{ib}
\begin{cases}
f(0,\vec{x})=a(\vec{x})\ge0,\not\equiv0\ (\vec{x}\in\Omega_{\text{C}}),\\
f(t,\vec{x})=0\ (t\ge0,\ \vec{x}\in\partial\Omega_{\text{C}}),
\end{cases}
\end{equation}
where $\Omega_{\text{C}}$ is a bounded subset of $\mathbb{R}^d$,
the following theorem can be shown to hold.
\begin{theorem}[\cite{F}]
The solution of \eqref{heq} with initial and boundary conditions \eqref{ib} does not blow up at any finite time for sufficiently small initial conditions $a(\vec{x})$.
\end{theorem}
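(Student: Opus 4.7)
The plan is to dominate $f$ by a time-decaying explicit supersolution built from the first Dirichlet eigenfunction of the Laplacian on a slightly enlarged domain, and then invoke the parabolic comparison principle.

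First I would pick an open bounded $\Omega' \supset \overline{\Omega_{\mathrm{C}}}$ and let $\varphi_1>0$ be a principal Dirichlet eigenfunction of $-\Delta$ on $\Omega'$ with eigenvalue $\lambda_1>0$. Because $\overline{\Omega_{\mathrm{C}}}$ lies strictly inside $\Omega'$, there exist constants $0<m\le M<\infty$ with $m\le\varphi_1(x)\le M$ for every $x\in\overline{\Omega_{\mathrm{C}}}$. Next, fix any $\mu\in(0,\lambda_1)$ and consider the trial function
\[
\bar f(t,x) \;=\; \varepsilon\,\varphi_1(x)\,e^{-\mu t},
\]
where $\varepsilon>0$ is to be chosen small.

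The second step is to verify that $\bar f$ is a supersolution of \eqref{heq}. Differentiating directly,
\[
\partial_t \bar f - \Delta \bar f - \bar f^{\,1+\alpha}
\;=\; \bigl[(\lambda_1-\mu) - \bar f^{\,\alpha}\bigr]\,\bar f .
\]
Since $\bar f\le\varepsilon M$ on $[0,\infty)\times\overline{\Omega_{\mathrm{C}}}$, the bracket is nonnegative provided $\varepsilon\le M^{-1}(\lambda_1-\mu)^{1/\alpha}$. At the same time, on $\partial\Omega_{\mathrm{C}}$ one has $\bar f>0=f$, and if $\|a\|_{L^\infty}\le\varepsilon m$ then $a(x)\le\varepsilon\varphi_1(x)=\bar f(0,x)$ pointwise on $\Omega_{\mathrm{C}}$. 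Thus, after fixing $\varepsilon$ by the above inequality and then requiring $\|a\|_{L^\infty}\le\varepsilon m$, the function $\bar f$ dominates $f$ at $t=0$ and on the lateral boundary, while being a classical supersolution of \eqref{heq} in the interior.

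The third step is to apply the parabolic comparison principle for semi-linear equations on $[0,T)\times\Omega_{\mathrm{C}}$ to conclude $0\le f(t,x)\le\bar f(t,x)\le\varepsilon M$ uniformly in $(t,x)$. Since this bound is independent of $T$, the quantity $\|f(t,\cdot)\|_{L^\infty}$ stays uniformly bounded, contradicting the blow-up condition at every finite $T$.

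The main obstacle is the comparison step itself: the standard parabolic maximum principle presupposes enough regularity of $f$ and $\bar f$ up to the boundary, so one has to either invoke local well-posedness of \eqref{heq} in a space such as $C^{2,1}(\overline{\Omega_{\mathrm{C}}}\times[0,T))$ and compare pointwise, or work with mild solutions via the Duhamel formula $f(t)=e^{t\Delta}a+\int_0^t e^{(t-s)\Delta}f(s)^{1+\alpha}\,ds$ and iterate the bound $\|f(t)\|_\infty\le\varepsilon M$. Both routes are routine for the Dirichlet heat semigroup on a bounded domain, but this is the place where regularity of $\partial\Omega_{\mathrm{C}}$ and admissibility of the initial datum $a$ genuinely enter.
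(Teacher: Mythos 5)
Your argument is correct as far as it goes, but note first that the paper does not actually prove this statement: Theorem 1 is quoted from Fujita's paper \cite{F}, and the body of the article only proves its discrete analogue (Theorem 2). So the honest comparison is with the paper's proof of Theorem 2, and there the routes genuinely differ. You build a strict supersolution $\varepsilon\varphi_1(x)e^{-\mu t}$ from the principal Dirichlet eigenfunction of an enlarged domain and invoke the parabolic comparison principle once; the bracket computation $\partial_t\bar f-\Delta\bar f-\bar f^{1+\alpha}=[(\lambda_1-\mu)-\bar f^{\alpha}]\bar f$ is right, the enlargement of the domain correctly guarantees $m\le\varphi_1\le M$ on $\overline{\Omega_{\text{C}}}$, and the smallness conditions $\varepsilon\le M^{-1}(\lambda_1-\mu)^{1/\alpha}$ and $\|a\|_{L^\infty}\le\varepsilon m$ close the argument, modulo the well-posedness and regularity caveats you yourself flag (one should also record that $f\ge0$, by comparison with the zero solution, so that the $L^\infty$ norm is controlled). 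The paper instead takes the solution $h$ of the \emph{linear} problem with the same data and majorizes $f$ by $h$ divided by a correction factor of the form $\{1-\sum_k|m_k|^{\alpha}\}^{1/\alpha}$, where $m_k$ is the sup of $h$ at time $k$; global existence then reduces to summability of $\|h\|_{\infty}^{\alpha}$ over time, which is checked via the eigenfunction expansion of $h$. Your approach is shorter and gives an explicit exponential decay rate, but it leans on the comparison principle and eigenfunction positivity in a form that does not transfer verbatim to the discrete setting; the paper's ``renormalized linear solution'' majorant is clumsier but is built exactly so that the induction in its Lemma 3.1 goes through for the rational nonlinearity of \eqref{dheq}, where the blow-up mechanism is $g^s_{\vec n}$ reaching a finite threshold rather than $f$ becoming unbounded.
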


In this article, we show that \eqref{dheq} has a property similar to theorem 1.
In section 2, we define the blow-up of solutions for \eqref{dheq} and state the main theorem which is a discrete analogue of theorem 1.
This theorem is proved in section 3.
\section{Main theorem}
First, we define the blow-up of solutions for \eqref{dheq}.
Because of the term $\displaystyle \{1-\alpha\delta(g^s_{\vec{n}})\}^{1/\alpha}$, when $g^s_{\vec{n}} \to (\alpha\delta)^{-1/\alpha}-0$, then $f^{s+1}_{\vec{n}} \to +\infty$.
This behaviour may be regarded as an analogue of the blow up of solutions for the semi-linear heat equation.
Thus we define a global solution of \eqref{dheq} as follows.
\begin{definition}
Let $f^s_{\vec{n}}$ be a solution to \eqref{dheq}.

When there exists an $s_0 \in \mathbb{Z}_{{}\ge0}$ such that $g^{s}_{\vec{n}} \le (\alpha\delta)^{-1/\alpha}$ for all $s < s_0$ and $\vec{n} \in \Omega_{\text{D}}$, and when there exists $\vec{n}_0 \in \Omega_{\text{D}}$ such that $g^{s_0}_{\vec{n}_0} \ge (\alpha\delta)^{-1/\alpha}$, then we say that the solution $f^s_{\vec{n}}$ blows up at time $s_0$.
\end{definition}
The following theorem is the main theorem of this paper.
\begin{theorem}
For $\Omega_{\text{D}}=\{\vec{n}=(n_1,\cdots,n_d)\in\mathbb{Z}^d|0\le n_k\le N_k\ (k=1,\cdots,d)\}$, the solution of \eqref{dheq} does not blow up at any finite time with sufficiently small initial condition $a_{\vec{n}}$.
\end{theorem}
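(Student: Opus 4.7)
The natural strategy is a discrete comparison argument: exhibit a super-solution of separable form $F^s_{\vec n}=c\beta^s\phi_{\vec n}$ that decays in $s$, dominates the initial data, and keeps $g^s$ strictly below the threshold $(\alpha\delta)^{-1/\alpha}$ for every $s$. Here $\phi$ will be a Perron eigenvector of the discrete averaging operator on the box with Dirichlet boundary, $\mu\in(0,1)$ its eigenvalue, and $\beta\in(\mu,1)$ and $c>0$ suitably chosen parameters.

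\textbf{Step 1 (comparison).} The update map $\Phi(g):=g/(1-\alpha\delta g^\alpha)^{1/\alpha}$ is strictly increasing in $g$ on $[0,(\alpha\delta)^{-1/\alpha})$, and the averaging $f\mapsto g(f)$ preserves the pointwise order. Hence, by induction on $s$, any nonnegative $\bar f$ that satisfies the boundary condition, $\bar f^0\ge a$, and $\bar f^{s+1}_{\vec n}\ge\Phi(g^s(\bar f))_{\vec n}$ whenever its $g$-values lie below the threshold, dominates the true solution $f^s_{\vec n}$ pointwise.

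\textbf{Step 2 (Perron eigenvector).} The operator $(Af)_{\vec n}:=\sum_{k=1}^d(f_{\vec n+\vec e_k}+f_{\vec n-\vec e_k})/(2d)$ acting on functions on $\Omega_{\text D}^\circ$ extended by zero on $\partial\Omega_{\text D}$ is symmetric, nonnegative, and irreducible (the box is connected). By Perron--Frobenius there exist $\mu>0$ and $\phi_{\vec n}>0$ on $\Omega_{\text D}^\circ$ with $A\phi=\mu\phi$. Since every row of $A$ has sum $\le 1$ and the rows indexed by interior points adjacent to $\partial\Omega_{\text D}$ have sum $<1$, irreducibility forces $\mu<1$.

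\textbf{Step 3 (super-solution and smallness).} Fix $\beta\in(\mu,1)$ and set $F^s_{\vec n}:=c\beta^s\phi_{\vec n}$. A direct computation yields $g^s(F)_{\vec n}=c\beta^s\mu\phi_{\vec n}$, and the super-solution inequality $F^{s+1}_{\vec n}\ge\Phi(g^s(F))_{\vec n}$ is equivalent to
\begin{equation*}
\alpha\delta\,(c\beta^s\mu\phi_{\vec n})^\alpha\le 1-(\mu/\beta)^\alpha.
\end{equation*}
Because $\beta^s\le1$ and $\phi$ is bounded, this holds uniformly in $s$ and $\vec n$ once $c$ is chosen so small that $\alpha\delta(c\mu\max_{\vec n}\phi_{\vec n})^\alpha\le 1-(\mu/\beta)^\alpha$. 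If in addition the initial data is small enough that $a_{\vec n}\le c\phi_{\vec n}$ for every $\vec n$ (a condition ensured by $\max_{\vec n}a_{\vec n}\le c\min_{\vec n\in\Omega_{\text D}^\circ}\phi_{\vec n}$, since $\phi>0$ on the finite set $\Omega_{\text D}^\circ$), then Step~1 gives $f^s_{\vec n}\le c\beta^s\phi_{\vec n}$ for all $s$, whence $g^s_{\vec n}\le c\beta^s\mu\max_{\vec m}\phi_{\vec m}$ stays strictly below $(\alpha\delta)^{-1/\alpha}$ forever and no blow-up can occur.

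\textbf{Main obstacle.} The only nontrivial ingredient is Step~2: producing a strictly positive eigenvector of $A$ on the interior with eigenvalue $\mu<1$. The positivity requires Perron--Frobenius via irreducibility of the box, and the strict inequality $\mu<1$ relies on the Dirichlet condition making $A$ sub-stochastic on boundary-adjacent sites combined with irreducibility. Once this is in place, Steps~1 and~3 are straightforward verifications.
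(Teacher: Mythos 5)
Your proof is correct, but it takes a genuinely different route from the paper. The paper also runs a monotone comparison argument, but its majorant is $\bar f^s_{\vec n}=h^s_{\vec n}\bigl\{1-\sum_{k=0}^{s}|m_k|^\alpha\bigr\}^{-1/\alpha}$, where $h$ is the full solution of the linearized problem \eqref{dlheq} and $m_k=\max_{\vec n}h^k_{\vec n}$; proving global existence then reduces to showing $\sum_{k=0}^\infty|m_k|^\alpha<1$ for small data, which the paper does by writing $h$ explicitly as a discrete sine series on the box, proving a separate proposition that the coefficients $B_{\vec n}$ are uniquely determined, and estimating the series with a case split $\alpha\le1$ versus $\alpha>1$. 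You instead use a single-mode separable super-solution $c\beta^s\phi_{\vec n}$ built on the Perron eigenvector of the averaging operator, with $\mu<1$ and $\beta\in(\mu,1)$. Your version needs only the principal eigenvalue rather than the whole spectral decomposition, avoids the coefficient-uniqueness proposition and the case distinction in $\alpha$, and works verbatim on any finite domain with connected interior; what the paper's construction buys is a sharper majorant that tracks the full linear evolution (and its comparison lemma is stated for general $\Omega_{\text{D}}$ before specializing to the box). Two small points worth making explicit in your write-up: the requirement $1-(\mu/\beta)^\alpha>0$ is precisely why $\beta$ must be chosen strictly larger than $\mu$, and the domination $a_{\vec n}\le c\phi_{\vec n}$ on $\partial\Omega_{\text{D}}$ (where $\phi$ vanishes) holds because the boundary condition forces $a_{\vec n}=0$ there. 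Neither affects the validity of the argument.
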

\section{Proof of the theorem}
To prove the theorem, we make use of a comparison theorem.

First, to simplify the equations, we take the scaling $(\alpha\delta)^{1/\alpha}f^s_{\vec{n}} \to f^s_{\vec{n}}$ which changes the difference equation in \eqref{dheq} to
\begin{equation*}
\displaystyle{f^{s+1}_{\vec{n}} = \frac{g^s_{\vec{n}}}{\{1-(g^s_{\vec{n}})^\alpha\}^{1/\alpha}}}
\end{equation*}
Now, we construct a majorant solution.
Let
\begin{equation}\label{dslh2}
\displaystyle{\widehat{M}(h_{\vec{n}}):=\frac{1}{2d}\sum^d_{k=1}{(h_{\vec{n}+\vec{e}_k}+h_{\vec{n}-\vec{e}_k})}}.
\end{equation}
We denote by $h^s_{\vec{n}}$ the solution to the initial and boundary condition problem of the linear partial difference equation
\begin{equation}\label{dlheq}
\begin{cases}
h^{s+1}_{\vec{n}} = \widehat{M}(h^s_{\vec{n}})\ (s\in\mathbb{Z}_{\ge0},\ \vec{n}\in\Omega_{\text{D}}^\circ) \\
h^0_{\vec{n}} = a_{\vec{n}}\ (\vec{n}\in\Omega_{\text{D}}),\\
h^s_{\vec{n}} =0\ (s\in\mathbb{Z}_{\ge0},\ \vec{n}\in\partial\Omega_{\text{D}}).
\end{cases}
\end{equation}
The majorant solution is $\bar{f}^s_{\vec{n}}$ defined as follow:
\begin{equation}\label{ssol}
\displaystyle{\bar{f}^s_{\vec{n}} := \frac{h^s_{\vec{n}}}{\left\{1-\sum\limits^s_{k=0}{|m_k|^\alpha}\right\}^{1/\alpha}}},
\end{equation}
where $m_s$ is defined in terms of \eqref{dlheq} as
\begin{equation}\label{ms}
m_s := \max_{\vec{n}\in\Omega_{\text{D}}^\circ}{h^s_{\vec{n}}}.
\end{equation}
\begin{lemma}
When $\bar{f}^s_{\vec{n}}$ exists at $s$, for all $\vec{n} \in \mathbb{Z}^d$, namely when
\begin{equation*}
1-\sum^s_{k=0}{|m_k|^\alpha}>0
\end{equation*}
holds, the solution of \eqref{dheq} does not blow up at any time $s$ and moreover satisfies
\begin{equation}\label{ineq}
\bar{f}^s_{\vec{n}} \geq f^s_{\vec{n}}.
\end{equation}
\end{lemma}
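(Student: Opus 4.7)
The plan is induction on $s$, proving $f^s_{\vec{n}} \leq \bar{f}^s_{\vec{n}}$ together with the fact that $g^s_{\vec{n}}$ stays strictly below the rescaled blow-up threshold $1$, for every $s$ at which $\bar{f}^s$ is defined. The base case $s=0$ is immediate: since $1 - |m_0|^\alpha > 0$, one has
\[
\bar{f}^0_{\vec{n}} = \frac{a_{\vec{n}}}{(1-|m_0|^\alpha)^{1/\alpha}} \geq a_{\vec{n}} = f^0_{\vec{n}}.
\]
On $\partial\Omega_{\text{D}}$ both $f^s$ and $h^s$ vanish, so all the work will be on $\Omega_{\text{D}}^\circ$.

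For the inductive step I would exploit two monotonicity facts: $\widehat{M}$ is linear with nonnegative coefficients, hence nondecreasing on nonnegative data, and the one-variable map $\phi(x) := x/(1-x^\alpha)^{1/\alpha}$ has derivative $(1-x^\alpha)^{-1/\alpha - 1} > 0$ on $[0,1)$, so it is strictly increasing there. Abbreviating $S_s := \sum_{k=0}^s |m_k|^\alpha$ and combining the inductive hypothesis $f^s \leq \bar{f}^s$ with the linearity of $\widehat{M}$ applied to $\bar{f}^s_{\vec{n}} = h^s_{\vec{n}}/(1-S_s)^{1/\alpha}$ gives
\[
g^s_{\vec{n}} = \widehat{M}(f^s_{\vec{n}}) \leq \widehat{M}(\bar{f}^s_{\vec{n}}) = \frac{h^{s+1}_{\vec{n}}}{(1-S_s)^{1/\alpha}}.
\]
Because $h^{s+1}_{\vec{n}} \leq m_{s+1}$, and because $S_{s+1} < 1$ is just the existence hypothesis for $\bar{f}$ one step ahead, the right-hand side lies in $[0,1)$, which both establishes $g^s_{\vec{n}} < 1$ (no blow-up at this step) and justifies applying $\phi$ to it.

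The key short calculation is then to simplify $\phi$ composed with this nested fraction:
\[
\phi\!\left(\frac{h^{s+1}_{\vec{n}}}{(1-S_s)^{1/\alpha}}\right) = \frac{h^{s+1}_{\vec{n}}}{\bigl(1 - S_s - (h^{s+1}_{\vec{n}})^\alpha\bigr)^{1/\alpha}}.
\]
Using $(h^{s+1}_{\vec{n}})^\alpha \leq m_{s+1}^\alpha$, the denominator is bounded below by $(1 - S_{s+1})^{1/\alpha}$, so that
\[
f^{s+1}_{\vec{n}} = \phi(g^s_{\vec{n}}) \leq \frac{h^{s+1}_{\vec{n}}}{(1-S_{s+1})^{1/\alpha}} = \bar{f}^{s+1}_{\vec{n}},
\]
closing the induction and simultaneously certifying non-blow-up at every time considered.

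I expect the only real obstacle to be cosmetic: one has to carry out the composition-of-fractions identity above carefully, and then recognize that the new increment $|m_{s+1}|^\alpha$ added when passing from $S_s$ to $S_{s+1}$ in the denominator of $\bar{f}$ is exactly what dominates $(h^{s+1}_{\vec{n}})^\alpha$ — which is the structural reason for defining $m_s$ as the spatial \emph{maximum} of $h^s$. Apart from this identity, the argument is pure monotonicity bookkeeping; no maximum principle for $h^s$ is needed beyond the trivial bound $h^s_{\vec{n}} \leq m_s$ built into the definition.
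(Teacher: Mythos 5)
Your proposal is correct and takes essentially the same route as the paper: the paper runs the identical induction, writing the key step as $(\bar{f}^{s+1}_{\vec{n}})^{-\alpha} = \frac{1-S_{s}}{(h^{s+1}_{\vec{n}})^\alpha} - \bigl|\frac{m_{s+1}}{h^{s+1}_{\vec{n}}}\bigr|^\alpha \le (g^{s}_{\vec{n}})^{-\alpha}-1 = (f^{s+1}_{\vec{n}})^{-\alpha}$, which is exactly your composition identity for $\phi$ together with $h^{s+1}_{\vec{n}}\le m_{s+1}$ and the monotonicity of $\widehat{M}$, read in terms of reciprocal powers. The only cosmetic difference is that working with $\phi$ directly absorbs the case $h^{s+1}_{\vec{n}}=0$, which the paper has to treat separately before it can raise to the power $-\alpha$.
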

\begin{proof}
We precede by induction on $s$.
When $s=0$, by the definition of the initial and boundary condition problem,  $f^0_{\vec{n}}$ exists and \eqref{ineq} holds because
\begin{equation*}
\displaystyle{\bar{f}^0_{\vec{n}} = \frac{h^0_{\vec{n}}}{\{1-|m_0|^\alpha\}^{1/\alpha}} \geq h^0_{\vec{n}} = f^0_{\vec{n}}}.
\end{equation*}
Suppose that the statement is true up to $s=s_0$ and that $\bar{f}^{s_0+1}_{\vec{n}}$ exists.
When $\bar{f}^{s_0+1}_{\vec{n}}=0$, we have that
\begin{eqnarray*}
\bar{f}^{s_0+1}_{\vec{n}}=0 &\  \Longleftrightarrow \ &h_{\vec{n}}^{s_0+1}=0\\
&\  \Longleftrightarrow \ &\widehat{M}(h_{\vec{n}}^{s_0})=0\\
&\  \Longleftrightarrow \ &h_{\vec{n}\pm \vec{e}_k}^{s_0}=0 \quad (k=1,2,\ldots, d)\\
&\  \Longleftrightarrow \ &\bar{f}_{\vec{n}\pm \vec{e}_k}^{s_0}=0 \quad (k=1,2,\ldots, d)\\
&\  \Longrightarrow \ &f_{\vec{n}\pm \vec{e}_k}^{s_0}=0 \quad (k=1,2,\ldots, d)\\
&\  \Longleftrightarrow \ &g_{\vec{n}}^{s_0}=0 \\
&\  \Longleftrightarrow \ &f_{\vec{n}}^{s_0+1}=0.
\end{eqnarray*}
Hence  \eqref{ineq} holds.

When $\bar{f}^{s_0+1}_{\vec{n}}>0$, if $g_{\vec{n}}^{s_0} = 0 $, then $f_{\vec{n}}^{s_0+1}=0$ and the statement is true.
Otherwise
\begin{eqnarray*}
0<(\bar{f}^{s_0+1}_{\vec{n}})^{-\alpha} &=& \frac{1-\sum\limits^{s_0+1}_{k=0}{|m_k|^\alpha}}{(h^{s_0+1}_{\vec{n}})^\alpha} = \frac{1-\sum\limits^{s_0}_{k=0}{|m_k|^\alpha}}{(h^{s_0+1}_{\vec{n}})^\alpha} - \left|\frac{m_{s_0+1}}{h^{s_0+1}_{\vec{n}}}\right|^\alpha \\
&\leq& \frac{1-\sum\limits^{s_0}_{k=0}{|m_k|^\alpha}}{\left\{\widehat{M}(h^{s_0}_{\vec{n}})\right\}^\alpha} - 1 =\frac{1}{\left\{\widehat{M}(\bar{f}^{s_0}_{\vec{n}})\right\}^\alpha} - 1 \\
&\leq& (g^{s_0}_{\vec{n}})^{-\alpha} - 1.
\end{eqnarray*}
From \eqref{dslh2}, $(g^{s_0}_{\vec{n}})^{-\alpha} - 1=(f^{s_0+1}_{\vec{n}})^{-\alpha}$ and we find
 $(\bar{f}^{s_0+1}_{\vec{n}})^{-\alpha} \leq (f^{s_0+1}_{\vec{n}})^{-\alpha}$,  i.e. $f^{s_0+1}_{\vec{n}} \leq \bar{f}^{s_0+1}_{\vec{n}}$.
Thus, from the induction hypothesis, the statement  is true for any non-negative integer $s$.
\end{proof}
From this lemma, by proving that $1-\sum^s_{k=0}{|m_k|^\alpha}>0$ for all $s\in\mathbb{Z}_{\ge0}$ with sufficiently small initial condition in \eqref{dheq}, one can complete the proof of the main theorem.

The solution of \eqref{dlheq} is 
\begin{equation*}
h^{s}_{\vec{n}} = \displaystyle{\sum\limits_{\vec{n}^{\prime}\in\Omega^\circ_{\text{D}}}{\left\{B_{\vec{n}^{\prime}}(c_{\vec{n}^{\prime}})^{s}\prod\limits_{k=1}^{d}{\sin{\left(\frac{n^{\prime}_k\pi}{N_k}n_k\right)}}\right\}}},
\end{equation*}
where $\vec{n}:=(n_1,\cdots,n_d),\ \vec{n}^\prime:=(n_1^\prime,\cdots,n_d^\prime),\ c_{\vec{n}^{\prime}}:=\sum^d_{k=1}{\frac{1}{d}\cos{(n^{\prime}_k\pi/N_k)}}$ and $B_{\vec{n}^\prime}$ are constants that satisfy $h^0_{\vec{n}}=a_{\vec{n}}$.
The following proposition concerning $B_{\vec{n}}$ can be proven.
\begin{proposition}
If the initial condition of \eqref{dlheq} $a_{\vec{n}}$ is fixed, $B_{\vec{n}}$ are determined uniquely.
\end{proposition}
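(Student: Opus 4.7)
The plan is to invert the linear system obtained by setting $s=0$ in the given expansion:
\begin{equation*}
a_{\vec{n}} = \sum_{\vec{n}'\in\Omega_{\text{D}}^\circ} B_{\vec{n}'} \prod_{k=1}^d \sin\!\left(\frac{n_k'\pi n_k}{N_k}\right), \qquad \vec{n}\in\Omega_{\text{D}}^\circ.
\end{equation*}
Both the equations (indexed by $\vec{n}\in\Omega_{\text{D}}^\circ$) and the unknowns (indexed by $\vec{n}'\in\Omega_{\text{D}}^\circ$) are in bijection with a set of cardinality $\prod_{k=1}^d(N_k-1)$, so uniqueness of $B_{\vec{n}'}$ is equivalent to invertibility of the coefficient matrix. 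I would establish invertibility by exhibiting the inverse explicitly, via the discrete orthogonality of the sine basis.

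First I would recall the standard one-dimensional identity
\begin{equation*}
\sum_{n=1}^{N-1} \sin\!\left(\frac{m\pi n}{N}\right)\sin\!\left(\frac{m'\pi n}{N}\right) = \frac{N}{2}\,\delta_{m,m'}, \qquad 1\le m,m'\le N-1,
\end{equation*}
which follows from the product-to-sum formula $2\sin A\sin B = \cos(A-B)-\cos(A+B)$ together with the closed form for $\sum_{n=1}^{N-1}\cos(\ell\pi n/N)$ obtained from a geometric sum. Taking the tensor product over the $d$ coordinate directions immediately gives
\begin{equation*}
\sum_{\vec{n}\in\Omega_{\text{D}}^\circ} \prod_{k=1}^d \sin\!\left(\frac{m_k\pi n_k}{N_k}\right)\sin\!\left(\frac{m_k'\pi n_k}{N_k}\right) = \prod_{k=1}^d \frac{N_k}{2}\,\delta_{\vec{m},\vec{m}'}.
\end{equation*}

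Next I would multiply both sides of the $s=0$ identity by $\prod_{k=1}^d \sin(m_k\pi n_k/N_k)$ and sum over $\vec{n}\in\Omega_{\text{D}}^\circ$. The tensor orthogonality collapses the right-hand side to a single term, yielding the explicit formula
\begin{equation*}
B_{\vec{m}} = \prod_{k=1}^d \frac{2}{N_k} \sum_{\vec{n}\in\Omega_{\text{D}}^\circ} a_{\vec{n}} \prod_{k=1}^d \sin\!\left(\frac{m_k\pi n_k}{N_k}\right),
\end{equation*}
which exhibits $B_{\vec{m}}$ as a uniquely determined function of the prescribed data $\{a_{\vec{n}}\}$. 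The only nontrivial step is the one-dimensional orthogonality identity; the rest is bookkeeping on the product structure of $\Omega_{\text{D}}^\circ$, and I do not anticipate any genuine obstacle.
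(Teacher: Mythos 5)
Your proof is correct, but it takes a genuinely different route from the paper's. The paper argues by induction on the dimension $d$: for $d=1$ it observes that the $N-1$ vectors $\left(\sin\frac{n\pi}{N},\dots,\sin\frac{n(N-1)\pi}{N}\right)$ are eigenvectors, with distinct eigenvalues proportional to $\cos(n\pi/N)$, of the $(N-1)\times(N-1)$ tridiagonal matrix with $0$ on the diagonal and $1$ on the off-diagonals, hence linearly independent, so the square linear system for the $B_n$ has a unique solution; the case of general $d$ is then reduced to $d=1$ one coordinate at a time. You instead invoke the discrete orthogonality of the sine vectors, which shows that the coefficient matrix $S$ satisfies $S^{\mathsf T}S=\prod_k (N_k/2)\,I$ and hence is invertible, and which moreover hands you the closed-form inversion $B_{\vec m}=\left(\prod_k \tfrac{2}{N_k}\right)\sum_{\vec n}a_{\vec n}\prod_k\sin(m_k\pi n_k/N_k)$. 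Your route buys more: it proves existence of the expansion as well as uniqueness, it gives an explicit formula for $B_{\vec m}$ (hence a concrete bound on $B=\max_{\vec n}|B_{\vec n}|$ in terms of $\max_{\vec n}|a_{\vec n}|$, which is precisely what the subsequent estimate of $\sum_k|m_k|^\alpha$ needs), and it dispenses with the induction on $d$ since the orthogonality tensorizes trivially. The one substantive computation, $\sum_{n=1}^{N-1}\sin(m\pi n/N)\sin(m'\pi n/N)=\tfrac N2\delta_{m,m'}$, is standard and your sketch (product-to-sum plus the geometric evaluation of $\sum_n\cos(\ell\pi n/N)$) is sound; just take care to split according to the parity of $\ell=m\mp m'$ when evaluating that cosine sum, since its value is $0$ for odd $\ell$ and $-1$ for even nonzero $\ell$, the two contributions cancelling in either case when $m\neq m'$.
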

\begin{proof}
This property is proved by induction on $d$.

When $d=1$, put $N:=N_1$.
Solving $N-1$ linear equations with $N-1$ unknowns:\ $a_{n^\prime}=\sum^{N-1}_{n=1}{B_n\sin{\left(\frac{n\pi}{N}n^\prime\right)}}\ (n^\prime=1,\cdots,N-1)$, the $B_n$ are determined.
If $N-1$ vectors $\left(\sin{\frac{n\pi}{N}},\cdots,\sin{\frac{n(N-1)\pi}{N}}\right)\ (n=1,\cdots,N-1)$ are linearly independent, then the $B_n$ are determined uniquely.
On the other hand, these $N-1$ vectors are eigenvectors of the following $(N-1) \times (N-1)$ matrix:
\begin{equation*}
\begin{pmatrix}
	0 & 1 & 0 & \ldots & 0 \\
	1 & 0 & \ddots & \ddots & \vdots \\
	0 & \ddots & \ddots & \ddots & 0 \\
	\vdots & \ddots & \ddots & 0 & 1 \\
	0 & \ldots & 0 & 1 & 0
\end{pmatrix}.
\end{equation*}
All eigenvector are linearly independent so that the $B_n$ are determined uniquely.

Suppose that the statement is true up to $d=d_0-1$.
Now we consider the case of $d=d_0$.
\begin{eqnarray*}
a_{\vec{n}^\prime} &=& \displaystyle{\sum\limits_{\vec{n}\in\Omega^\circ_{\text{D}}}{B_{\vec{n}}\prod\limits_{k=1}^{d_0}{\sin{\left(\frac{n_k\pi}{N_k}n^\prime_k\right)}}}}\\
&=& \displaystyle{\sum\limits_{n_{d_0}=1}^{N_{d_0}-1}{\sin{\left(\frac{n_{d_0}\pi}{N_{d_0}}n^\prime_{d_0}\right)}}\sum{B_{\vec{n}}\prod\limits_{k=1}^{d_0-1}{\sin{\left(\frac{n_k\pi}{N_k}n^\prime_k\right)}}}}
\end{eqnarray*}
If\ $n_1,\cdots,n_{d_0-1}$ are fixed, then each $\sum B_{\vec{n}}\prod^{d_0-1}_{k=1}{\sin{\left(\frac{n_k\pi}{N_k}n_k\right)}}$ is determined uniquely from the case of $d=1$.
Because of the induction hypothesis, the $B_{\vec{n}}$ are also determined uniquely.
Thus, the statement is true for any $d$.
\end{proof}
Now we estimate the infinite series $\sum^\infty_{k=0}{|m_k|^\alpha}$.
Take $B:=\max_{\vec{n}}|B_{\vec{n}}|$.
If one lets $\max_{\vec{n}}|a_{\vec{n}}|$ be small, $B$ also becomes small.
We consider three cases $\alpha\le1,\ \alpha >1$.

When $\alpha\le1$, we obtain
\begin{eqnarray*}
\displaystyle{\sum\limits_{k=0}^{\infty}{|m_k|^{\alpha}}} &\le& \displaystyle{\sum\limits_{k=0}^{\infty}\left(B\sum\limits_{\vec{n}\in\Omega^\circ_{\text{D}}}{|c_{\vec{n}}|^k}\right)^{\alpha}}\\
&\le&\displaystyle{\sum\limits_{k=0}^{\infty}B^\alpha\sum\limits_{\vec{n}\in\Omega^\circ_{\text{D}}}{|c_{\vec{n}}|^{k\alpha}}}\\
&=& B^\alpha\displaystyle{\sum\limits_{\vec{n}\in\Omega^\circ_{\text{D}}}{\frac{1}{1-|c_{\vec{n}}|^{\alpha}}}} < \infty.
\end{eqnarray*}
We used the inequality $(x+y)^\alpha \le x^\alpha+y^\alpha\ (x,y\ge0)$ in the second line.
The inequality above implies that $\sum^\infty_{k=0}{|m_k|^\alpha}$ can take an arbitrarily small value, if one lets the value of $B$ small.
Thus, $\sum^\infty_{k=0}{|m_k|^\alpha}<1$ with sufficiently small initial condition in \eqref{dlheq} and the statement of theorem 2 holds by lemma 3.1.

When $\alpha>1$, since $|c_{\vec{n}}|<1\ (\vec{n}\in\Omega_{\text{D}}^\circ)$, $|c_{\vec{n}}|^s\to0\ (s\to\infty)$ for all $\vec{n}\in\Omega_{\text{D}}^\circ$.
Thus, there exists $s_0\in\mathbb{Z}_{\ge0}$ such that $\sum_{\vec{n}\in\Omega_{\text{D}}^\circ}{|c_{\vec{n}}|^s}<1\ (s \ge s_0)$.
Now we get
\begin{eqnarray*}
\displaystyle{\sum\limits_{k=0}^{\infty}{|m_k|^{\alpha}}} &=& \displaystyle{\sum\limits_{k=0}^{s_0-1}{|m_k|^{\alpha}}} + \displaystyle{\sum\limits_{k=s_0}^{\infty}{|m_k|^{\alpha}}}\\
&\le& \displaystyle{\sum\limits_{k=0}^{s_0-1}{|m_k|^{\alpha}}} + \displaystyle{\sum\limits_{k=s_0}^{\infty}B^{\alpha}\left(\sum\limits_{\vec{n}\in\Omega^\circ_{\text{D}}}{|c_{\vec{n}}|^k}\right)^{\alpha}}\\
&\le& \displaystyle{\sum\limits_{k=0}^{s_0-1}{|m_k|^{\alpha}}} + \displaystyle{\sum\limits_{k=s_0}^{\infty}B^{\alpha}\sum\limits_{\vec{n}\in\Omega^\circ_{\text{D}}}{|c_{\vec{n}}|^k}}\\
&=& \displaystyle{\sum\limits_{k=0}^{s_0-1}{|m_k|^{\alpha}}} + \displaystyle{\sum\limits_{\vec{n}\in\Omega^\circ_{\text{D}}}B^{\alpha}\frac{|c_{\vec{n}}|^{s_0}}{1-|c_{\vec{n}}|}} < \infty.
\end{eqnarray*}
$\sum\limits_{k=0}^{s_0-1}{|m_k|^{\alpha}}$ can take an arbitrarily small value, if one let the value of $\max_{\vec{n}\in\Omega_{\text{D}}}{a_{\vec{n}}}$ be small so that the inequality above implies that $\sum^\infty_{k=0}{|m_k|^\alpha}$ can take an arbitrarily small value. (if $B$ is sufficiently small.)
Thus, $\sum^\infty_{k=0}{|m_k|^\alpha}<1$ with sufficiently small initial condition in \eqref{dlheq} and the statement of theorem 2 holds by lemma 3.1.
This completes the proof of the main theorem.
\section*{Acknowledgements}
The author is deeply grateful to Prof. Tetsuji Tokihiro who provided helpful comments and suggestions. 
The author would also like to thank to Prof. Ralph Willox and Dr. Shinsuke Iwao for useful comments and warm encouragements.

\end{document}